\newtheorem{thm}{Theorem}[section]
 \newtheorem{prop}[thm]{Proposition}
 \newdefinition{defn}[thm]{Definition}
 \newdefinition{exam}[thm]{Example}
 \theoremstyle{remark}
 \newtheorem{rem}[thm]{Remark}
\newcommand{\abs}[1]{\left\vert#1\right\vert}
 \newcommand{\norm}[1]{\left\Vert#1\right\Vert}
\begin{document}

\begin{frontmatter}






\title{Coarse embedding into uniformly convex Banach space}
\author{Qinggang Ren\fnref{fn1}}
\address{Department of Mathemetics, Kyoto University, Kyoto, Japan, 606-8502.}
\ead{qinggang.ren@hw4.ecs.kyoto-u.ac.jp} \fntext[fn1]{The author was
supported by GCOE pogramm.} \maketitle
\begin{abstract}

In this paper, we study the coarse embedding into Banach space. We
proved that under certain conditions, the property of embedding into
Banach space can be preserved under taking the union the metric
spaces. For a group $G$ strongly relative hyperbolic to a subgroup
$H$, we proved that if $H$ admits a coarse embedding into a
uniformly convex Banach space,
 so is $B(n)=\{g\in G|\abs{g}_{S\cup\mathscr{H}}\leq n\}$.
\end{abstract}
\begin{keyword}
Coarse embedding \sep Uniformly Banach space \sep Relative
hyperbolic group


\end{keyword}

\end{frontmatter}


\section{Introduction}
  After Gromov pointed out that the coarse embedding(also refer as uniform embedding) should be relevant to Novikov conjecture\cite{Gro}\cite{fr},
G.Yu introduced a property called property A for discrete metric
spaces\cite{Yu}. A metric space with property A admits a coarse
embedding into a Hilbert space. And G.Yu proved the coarse
Baum-Connes conjecture holds for the metric spaces with bounded
geometry, which admits a coarse embedding into a Hilbert
space\cite{Yu}. Subsequently G.Kasparov and G.Yu proved the coarse
geometric Novikov conjecture holds for discrete metric spaces with
bounded geometry, which admits a coarse embedding into a uniformly
convex Banach space\cite{KY}. Coarse embedding into Hilbert space
has been studied deeply these years, see \cite{DG}. But there are
less results on the coarse embedding into uniformly convex Banach
space. Also, V.Lafforgue constructed an example which can not be
coarse embedded into uniformly convex Banach spaces\cite{laf}. We
should mention that N. Brown and E. Guentner proved that every
metric space with bounded geometry admits a coarse embedding into a
strictly convex and reflexive Banach space\cite{BE}. \\
  \indent In this paper, we study the coarse embedding into a uniformly convex
Banach space. We first rewrite the condition for coarse embedding
into a uniformly Banach space.
\begin{thm}
Let X be a metric space and $E$ be a Banach space, and $1 \leq
p<+\infty$. If there is a $\delta
>0$, such that for every $R>0 , \varepsilon >0$, there is a map
$\varphi:X\rightarrow  E $ satisfying:
\\(1) $\sup \{ \norm{\varphi (x)- \varphi (y)
} : x,y \in X,\ d(x,y) \leq R \} \leq \varepsilon$.
\\(2) $\forall m \in \mathbb
N,\  \sup \{ \norm{\varphi (x)- \varphi (y) }: x,y \in X, \ d(x,y)
\leq m \}< +\infty$.
\\(3) $\lim_{s \rightarrow +\infty}\inf\{\norm{\varphi
(x)- \varphi (y) }:x,y \in X,d(x,y)\geq s \} \geq \delta$.
\\ then X admits a coarse embedding into $E^p$.
\end{thm}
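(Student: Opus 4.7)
The plan is to build the coarse embedding $\Phi : X \to E^p$ as an $\ell^p$-direct sum of a sequence of auxiliary maps $\varphi_n$, each one tuned so that it contracts the $n$-ball $\{d\leq n\}$ severely while still separating all sufficiently distant pairs by at least $\delta$. Here I interpret $E^p$ as the Banach space $\ell^p(\mathbb{N}, E)$ of $p$-summable $E$-valued sequences.

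First, for each integer $n \geq 1$, I would invoke the hypothesis with $R = n$ and $\varepsilon = 2^{-n}$ to obtain $\varphi_n : X \to E$ satisfying (1), (2), (3). Applying (3) with the constant $\delta/2 < \delta$ supplies some $s_n < +\infty$ with $\norm{\varphi_n(x) - \varphi_n(y)} \geq \delta/2$ whenever $d(x,y) \geq s_n$. Fix a basepoint $x_0 \in X$ and set $\Phi(x) = \bigl(\varphi_n(x) - \varphi_n(x_0)\bigr)_{n \geq 1}$. To verify $\Phi(x) \in \ell^p(\mathbb{N}, E)$, pick an integer $m \geq d(x, x_0)$: for every $n \geq m$, condition (1) gives $\norm{\varphi_n(x) - \varphi_n(x_0)}^p \leq 2^{-np}$, which is summable, and the head $n < m$ has finitely many finite terms by (2).

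For the upper bound, the identical head/tail split applied to any pair with $d(x,y) \leq m$ yields
\[
\norm{\Phi(x) - \Phi(y)}_p^p \leq \sum_{n < m} M_n(m)^p + \sum_{n \geq m} 2^{-np},
\]
where $M_n(m)$ is the finite bound supplied by (2) for $\varphi_n$; this defines a finite nondecreasing control function $\rho_+$. For the lower bound, whenever $d(x,y) \geq \max_{n \leq N} s_n$, each of the first $N$ coordinates of $\Phi(x) - \Phi(y)$ has norm at least $\delta/2$, so $\norm{\Phi(x) - \Phi(y)}_p \geq N^{1/p}\,\delta/2$. Hence $\rho_-(t) := \abs{\{n : s_n \leq t\}}^{1/p} \cdot \delta/2$ tends to $+\infty$ as $t \to +\infty$, because every individual $s_n$ is finite.

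The main subtlety is precisely this last step: condition (3) in isolation only yields a uniform separation $\delta$, not a proper lower control function, so the divergence $\rho_-(t) \to +\infty$ has to be manufactured by stacking countably many $\varphi_n$ in the $\ell^p$-direction and exploiting that each threshold $s_n$ is finite even though the sequence $(s_n)$ itself need not be bounded.
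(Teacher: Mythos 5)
Your proposal is correct and follows essentially the same route as the paper: take $\varphi_n$ with $R=n$, $\varepsilon=2^{-n}$, form the $\ell^p$-direct sum of the differences $\varphi_n(\cdot)-\varphi_n(x_0)$, bound the tail by condition (1), the head by condition (2), and get properness by stacking the $\delta/2$-separations past each finite threshold $s_n$. The only cosmetic difference is that the paper arranges the $s_n$ to be increasing and brackets $d(x,y)$ between $s_{k-1}$ and $s_k$, whereas you count $\abs{\set{n : s_n \leq t}}$ directly; these are the same argument.
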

This is generalized from the conditions for coarse embedding into a
Hilbert space\cite{DG}. Using this condition, we study the coarse
embedding under gluing property. This is easy to prove in the case
of Hilbert space, but difficult in the case of Banach space. We only
obtained some partial results.
\begin{prop}
  Let $X$ be a metric space, and $X=X_1 \cup X_2$, with $X_1,X_2$
  admit coarse embedding into a Banach space $E$ and $1 \leq p<+\infty$. If for any $s>0$, there is
   a bounded set $C_s$ such that the sets $\{X_i\setminus C_s\}$ is
  $s$-separated, then $X$ admits a coarse embedding into $E^p$.
\end{prop}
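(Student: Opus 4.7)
The plan is to apply Theorem~1.1, taking as Banach space $F = E \oplus_p E$: since $F^{p}$ is isometrically isomorphic to $E^{p}$, it will suffice to construct, for each $R>0$ and $\varepsilon>0$, a map $\psi : X \to F$ satisfying conditions (1)--(3) of Theorem~1.1 for a common $\delta>0$.

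For the construction, fix $R,\varepsilon > 0$ and invoke the hypothesis with $s = R+1$ to obtain a bounded set $C_s$ for which $X_1 \setminus C_s$ and $X_2 \setminus C_s$ are $s$-separated; write $D = \mathrm{diam}(C_s)$, and (after enlarging $C_s$ if necessary) fix base points $b_i \in X_i \cap C_s$. Since each $X_i$ coarsely embeds into $E$, rescaling the embedding produces $\varphi_i : X_i \to E$ satisfying
\[
\sup\{\norm{\varphi_i(u)-\varphi_i(v)} : d(u,v) \le R+D\} \le \varepsilon/3
\]
and $\liminf_{d(u,v)\to\infty}\norm{\varphi_i(u)-\varphi_i(v)} \ge 1$, with the $m$-close supremum finite for each $m$. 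After translating so that $\varphi_i(b_i)=0$, extend each to $\tilde\varphi_i : X \to E$ by declaring $\tilde\varphi_i \equiv 0$ on $X \setminus X_i$, and define $\psi(x) = (\tilde\varphi_1(x), \tilde\varphi_2(x)) \in F$.

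Condition~(1) will be verified by case analysis. The only delicate case is $x \in X_1 \setminus X_2$, $y \in X_2 \setminus X_1$ with $d(x,y) \le R < s$: the $s$-separation forces at least one of $x,y$ into $C_s$, hence both lie within distance $R+D$ of each $b_i$, and the chosen scale of $\varphi_i$ bounds each coordinate of $\psi(x)-\psi(y)$ by $\varepsilon/3$. For condition~(3), if $d(x_n,y_n) \to \infty$ then, since $b_1, b_2$ remain at bounded distance, at least one of $d(x_n,b_1),\ d(y_n,b_2)$ tends to infinity, and the $\liminf$ property of the corresponding $\varphi_i$ makes that coordinate of $\psi(x_n)-\psi(y_n)$ eventually at least $1$, giving $\delta = 1$ uniformly.

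The substantive obstacle is condition~(2) at scales $m \ge s$, because the fixed $C_s$ no longer excludes cross pairs $x \in X_1 \setminus C_s,\ y \in X_2 \setminus C_s$ with $d(x,y)\le m$, for which $\norm{\varphi_1(x)}$ could a priori be unbounded. This will be circumvented by reapplying the hypothesis with parameter $s'=m+1$: the resulting bounded $C_{m+1}$ must then contain at least one of $x,y$. Since $C_s \cup C_{m+1}$ has finite diameter $N_m$, this yields $d(x,b_1),\ d(y,b_2) \le N_m + m$, and property~(2) of $\varphi_i$ on $X_i$ provides the required finite upper bound. With (1)--(3) so established and $\delta=1$ independent of $R,\varepsilon$, Theorem~1.1 completes the proof.
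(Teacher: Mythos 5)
Your proposal is correct and follows essentially the same route as the paper's proof of Proposition \ref{unionthm}: rescale the two coarse embeddings as in Example \ref{nccond}, assemble them into a map to $(E\oplus E)_p$ supported (essentially) on one coordinate per piece, invoke the separation set at scale $m+1$ to control cross-pairs in condition (2), and feed the result into Theorem \ref{mainth}. The one imprecision is in condition (3): when $x_n,y_n$ lie in the same piece $X_i$ you must use the $i$-th coordinate $\varphi_i(x_n)-\varphi_i(y_n)$ together with $d(x_n,y_n)\to\infty$ directly, since the coordinate selected by ``$d(x_n,b_1)\to\infty$ or $d(y_n,b_2)\to\infty$'' can be identically zero (e.g.\ for $x_n,y_n\in X_1\setminus X_2$ the second coordinate vanishes); this is exactly the extra case split the paper carries out, and the fix is immediate.
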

Recall that two subsets $X_1, X_2$ of a metric space $X$ are
s-separated if $d(X_1, X_2)=\inf\{d(x,y),x \in X_1, y \in X_2\} \geq
s$.
\begin{prop}
If $X$ is long range disconnected at infinity and all $\{X^n_i\}$
are equivalently coarse embedded into a Banach space $E$ by coarse
maps $\{\varphi^n_i\}$, then $X$ admits a coarse embedding into
$E^p$.
\end{prop}
\indent In case of infinite union, we have
\begin{prop}
Let $X$ be a metric space with $X=\cup_{i\in I}X_i$, if for any
$s>0$, there is a bounded set $C_s$ with $X_i\cap C_s\ne \emptyset$
for any i and the sets $\{X_i\setminus C_s\}$ is $s$-separated. If
$\{X_i\}$ can be equally coarse embedded into $E$, then $X$ can be
coarse embedded into $E^p$.
\end{prop}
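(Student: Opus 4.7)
The plan is to verify the criterion of Theorem 1.1 with target Banach space $F := \ell^p(I, E)$, the $\ell^p$-sum of copies of $E$ indexed by $I$. Provided $I$ is countable (the standing hypothesis in this setting), $F^p \cong E^p$, so Theorem 1.1 applied to a family of maps $\varphi_{R, \varepsilon} : X \to F$ satisfying (1)--(3) will yield a coarse embedding of $X$ into $E^p$. My task therefore reduces to constructing, for each $R > 0$ and $\varepsilon > 0$, a single map $\varphi : X \to F$ with properties (1), (2), (3) for a common $\delta$.

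Fix $R, \varepsilon$, choose $s > R$, and use the hypothesis to obtain the bounded set $C_s$ with base points $p_i \in X_i \cap C_s$. Let $D = \operatorname{diam}(C_s)$. The equally-coarse-embedded hypothesis supplies uniform moduli $\rho_\pm$ and maps $\varphi_i : X_i \to E$; I rescale and normalize by setting $\psi_i(x) = c\bigl(\varphi_i(x) - \varphi_i(p_i)\bigr)$ with $c = \varepsilon / \bigl(2\rho_+(R + D)\bigr)$, so that $\psi_i(p_i) = 0$ and $\|\psi_i(x)\| \leq c\,\rho_+(d(x, p_i))$ uniformly in $i$. Since $\{X_i \setminus C_s\}$ is $s$-separated, each $x \in X \setminus C_s$ lies in a unique $X_{i(x)}$; I extend $i(\cdot)$ arbitrarily to $C_s$ subject to $x \in X_{i(x)}$. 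Define
\[
\varphi(x) = \psi_{i(x)}(x) \cdot e_{i(x)} \in F,
\]
where $e_j$ is the isometric embedding of $E$ into the $j$-th coordinate of $F$. The key identity is
\[
\|\varphi(x) - \varphi(y)\|_F^p = \begin{cases} \|\psi_{i(x)}(x) - \psi_{i(x)}(y)\|^p, & i(x) = i(y), \\ \|\psi_{i(x)}(x)\|^p + \|\psi_{i(y)}(y)\|^p, & i(x) \neq i(y). \end{cases}
\]

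For (1) with $d(x, y) \leq R$, the case $i(x) = i(y)$ is immediate from (1) for $\psi_{i(x)}$. When $i(x) \neq i(y)$, the $s$-separation with $s > R$ forces at least one of $x, y$ into $C_s$, and since all $p_i \in C_s$ a single triangle inequality yields $d(x, p_{i(x)}), d(y, p_{i(y)}) \leq R + D$; the choice of $c$ then forces $\|\psi_{i(x)}(x)\|, \|\psi_{i(y)}(y)\| \leq \varepsilon/2$, hence $\|\varphi(x) - \varphi(y)\|_F \leq \varepsilon$. Condition (3) holds with $\delta = 1$: for $d(x,y)$ large, either the same-branch summand grows via $\rho_-$, or in the cross-branch case the boundedness of $C_s$ forces $\max(d(x, p_{i(x)}), d(y, p_{i(y)})) \to \infty$ and $\|\varphi(x) - \varphi(y)\|_F \to \infty$.

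The main obstacle is condition (2) in the cross-branch case on large scales $m \geq s$, where $x, y$ may both lie outside $C_s$ in different branches: then $d(x, p_{i(x)})$ is not a priori bounded by $m + D$ via the direct triangle argument used for (1). I expect this to be handled by observing that any rectifiable path between distinct branches outside $C_s$ must traverse a neighborhood of $C_s$, so $d(x, p_{i(x)}) + d(y, p_{i(y)})$ is controlled in terms of $d(x, y)$ plus a constant depending on $D$; this uses the geometric content of the hypothesis more fully than the $s$-separation alone. Once (1)--(3) are in place, Theorem 1.1 applied to $\varphi : X \to F$ produces the coarse embedding of $X$ into $F^p \cong E^p$.
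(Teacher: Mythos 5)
Your construction is the same as the paper's (base points $x_i\in X_i\cap C_s$, send $x\in X_{i}$ to $\varphi_i(x)-\varphi_i(x_i)$ placed in the $i$-th coordinate of an $\ell^p$-sum of copies of $E$, verify the three conditions of Theorem 3.1), and your treatment of conditions (1) and (3) is fine. But the step you flag as the ``main obstacle'' --- condition (2) for cross-branch pairs $x\in X_i\setminus C_s$, $y\in X_j\setminus C_s$, $i\neq j$, $d(x,y)\leq m$ with $m\geq s$ --- is a genuine gap, and the mechanism you propose to close it does not work. A general metric space has no rectifiable paths at all (e.g.\ a discrete group with a word metric, which is the intended application in Section 5), so ``any path between the branches must traverse a neighborhood of $C_s$'' is not available; moreover the bound you hope for, namely $d(x,p_{i(x)})+d(y,p_{i(y)})\lesssim d(x,y)+\mathrm{const}$ uniformly, is stronger than anything the hypothesis gives and is not needed.

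The correct way to close the gap --- and what the paper does --- is to use the hypothesis a second time, at scale $m$ rather than at scale $s$: for each $m$ there is a \emph{bounded} set $C_m$ such that $\{X_i\setminus C_m\}$ is $m$-separated. If $i(x)\neq i(y)$ and $d(x,y)\leq m$, then $x$ and $y$ cannot both lie outside $C_m$, so (say) $x\in C_m$. Since $C_m$ and $C_s$ are both bounded, there is a constant $h_m$ with $C_m\subset B(C_s,h_m)$, whence $d(x,p_{i(x)})\leq h_m+\operatorname{diam} C_s$ and $d(y,p_{i(y)})\leq m+h_m+\operatorname{diam} C_s$. This bounds $\|\psi_{i(x)}(x)\|^p+\|\psi_{i(y)}(y)\|^p$ by a constant depending only on $m$ (via the uniform upper modulus $\rho_+$), which is exactly what condition (2) of Theorem 3.1 requires --- a bound for each fixed $m$, not a bound linear in $d(x,y)$. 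With this substitution for your path argument, your proof matches the paper's.
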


Further, we study the coarse embeddability of the relative
hyperbolic group and prove that:
\begin{thm}
If the group $G$ is strongly relative hyperbolic to a subgroup $H$
and $H$ admits a coarse embedding into a uniformly convex Banach
space$E$, let $B(n)=\{g\in G|\abs{g}_{S\cup\mathscr{H}}\leq n\}$,
then $B(n)$ admits a coarse embedding into $E^p$.
\end{thm}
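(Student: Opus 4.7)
The plan is to induct on $n$. The base case $n=0,1$ is immediate: $B(1)=\{e\}\cup S\cup H$ differs from $H$ by the finite set $S\cup\{e\}$, so the assumed coarse embedding of $H$ into $E$ extends (trivially on the finite part) to $B(1)$, and Theorem~1.1 then produces an embedding into $E^p$.

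For the inductive step, assume $B(n-1)$ admits a coarse embedding into $E^p$. I would decompose
$$B(n)\;=\;\bigcup_{\gamma\in\Gamma_n}\bigl(\gamma H\cap B(n)\bigr),$$
where $\Gamma_n$ is a set of shortest $|\cdot|_{S\cup\mathscr{H}}$-representatives of the left $H$-cosets meeting $B(n)$. Each piece $\gamma H\cap B(n)$ is isometric, via left multiplication by $\gamma^{-1}$, to a subset of $H$, so all the pieces are uniformly---``equally'' in the sense of Proposition~1.4---coarsely embeddable into $E$ by translates of one fixed embedding of $H$. The objective is to glue these, together with the inductive embedding of $B(n-1)$, into a single coarse embedding of $B(n)$ into $E^p$.

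The geometric input that powers the gluing is the bounded coset penetration (BCP) property of strong relative hyperbolicity: for every $s>0$ there is $C(s)>0$ such that for any two distinct cosets $\gamma_1 H\neq\gamma_2 H$, the intersection $N_s(\gamma_1H)\cap \gamma_2 H$ has word-metric diameter at most $C(s)$. This delivers, outside a bounded set, the pairwise $s$-separation of the pieces required by Proposition~1.4.

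The main obstacle will be the other hypothesis of Proposition~1.4: producing a \emph{single} bounded set $C_s$ that meets every piece $\gamma H\cap B(n)$. Distinct cosets in $\Gamma_n$ can have representatives arbitrarily far from $e$ in the word metric, so no ball around $e$ works. I would resolve this by a layered construction: stratify the cosets by their depth $k=|\gamma|_{S\cup\mathscr{H}}\leq n$ and attach them to the already-embedded $B(n-1)$ one layer at a time via Proposition~1.2 (or a depth-adapted application of Proposition~1.4), in such a way that within each stratum the newly attached cosets share a common bounded base region sitting inside $B(n-1)$, and BCP separates the new cosets outside a bounded neighborhood of that base region. The hard part is checking that this layered application really does produce a single uniform coarse embedding $B(n)\to E^p$ with control functions independent of $\gamma$.
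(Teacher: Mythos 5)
Your setup (induction on $n$, decomposition of $B(n)$ into left cosets of $H$, BCP as the separation mechanism) matches the paper's, and you have correctly located the obstruction: no single bounded set meets all the cosets, so Proposition 1.4 does not apply as stated. But your proposed fix does not close the gap. Stratifying the cosets by the depth $k=\abs{\gamma}_{S\cup\mathscr{H}}$ of their representatives does not produce ``a common bounded base region'' within a stratum: already for $k=1$ the representatives range over all of $B(n-1)$ (more precisely over a transversal $R(n-1)\subset B(n-1)$ of the cosets), and these are unbounded in the word metric $d_S$, which is the metric in which the embedding must be coarse. So each stratum has exactly the same defect as the whole union, and the layered application of Propositions 1.2/1.4 never gets off the ground. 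You flagged this verification as ``the hard part,'' and it is in fact the point where the argument fails rather than a routine check.

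The paper's resolution is different in kind and is the idea you are missing: it abandons the bounded common core entirely. It sets $Y_R=B(n-1)T_R$ with $T_R=\{g:\abs{g}_S\leq a(R)\}$ --- an \emph{unbounded} set, but one that is coarsely embeddable because it is a bounded $d_S$-neighborhood of $B(n-1)$, which is handled by the inductive hypothesis. Osin's result gives that the pieces $\{gH\setminus Y_R\}$ are $R$-separated. Then, instead of sending each coset piece into its own summand relative to a basepoint in a common bounded set, the paper maps $x=g_ix_i\in g_iH\setminus Y_R$ to $\varphi_1(g_i)\oplus\varphi_2(x_i)$ in $E\oplus(\bigoplus_{g_i}E)$: the first coordinate uses the inductive embedding $\varphi_1$ of $Y_R$ to record \emph{which} coset $x$ lies in and to force divergence when the two cosets' representatives drift apart, while the second coordinate uses the embedding $\varphi_2$ of $H$ to control positions within a single coset. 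The three conditions of Theorem 1.1 are then verified directly for this hybrid map, with BCP controlling the case of nearby points in distinct cosets. Without this anchoring of each coset to $\varphi_1(g_i)$, condition (3) (properness) cannot be obtained from any purely summand-by-summand construction, because two points in far-apart cosets could each be close to their own coset's basepoint.
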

 The author would like to show his thanks to Professor X.Chen and
Professor G.Yu for helpful discussion.
\section {Coarse geometry and convex Banach space}
We first recall some definitions in coarse geometry\cite{roe}.
\begin{defn} Let $
X,Y $ be  metric
spaces, and $f$ be a map from $X$ to $Y$: \\
(1) The map $f$ is $proper$ if the inverse image, under $f$, of any
bounded subset of $Y$, is a bounded subset of $X$. \\
(2) The map is $bornologous$ if for every $R>0$ there is an $S>0$,
such
that $d(x,y)\leq R$ implies $d(f(x),f(y))\leq S$. \\
(3)$f$ is \emph{coarse} if it is proper and bornologous.\\
\end{defn}
 \indent And we
say that $X$ admits a  coarse embedding into $Y$ if there is a
coarse map $f:X\rightarrow Y$. We usually consider the case where $Y$ is a Banach space.\\
 We will often need to deal with a family of metric spaces.
\begin{defn}A family of metric space $\{X_i\}_{i\in I}$ is called
equivalently coarse embedded into a metric space $Y$ if there exist $\{f_i:X_i\rightarrow Y\}_{i\in I}$ satisfying:\\
  (1)$\forall s\geq 0$, there exist $S\geq 0$, if $d(x_i,x_i')\leq
 s$ then $d(f_i(x_i),f_i(x_i'))\leq S$ for all $i\in I$.\\
  (2)$\forall r\geq 0$, there exist $R\geq 0$, if $d(f_i(x_i),f_i(x_i'))\leq r$ then $d(x_i,x_i')\leq
 R$ for all $i\in I$.
\end{defn}
Uniformly convex Banach space is an important object to study in
classical Banach space theory\cite{JL}.
\begin{defn}A Banach space $E$ is called uniformly convex if for any
$\epsilon>0$, there exists a $\delta>0$, for any $x,y\in E$ with
$\norm{x}=\norm{y}=1$ and $\norm{x-y}\geq \epsilon$, then
$\norm{\frac{x+y}{2}}\leq 1-\delta$.
\end{defn}
We know $\ell^p(1<p<+\infty)$ is uniformly convex Banach space. If
$E$ is a uniformly convex Banach space, let $$E^p=\{x=(x_i)_{i\in
\mathbb N}|x_i \in E(i\in \mathbb N), \sum_{n\in \mathbb
N}\norm{x_i}^p<+\infty\}$$ with the norm $\norm{x}=(\sum_{n\in
\mathbb N}\norm{x_i}^p)^\frac{1}{p}$. If $1<p<+\infty$, $E^p$ is
also a uniformly convex Banach space.
\section{Coarse embedding into uniformly convex Banach space}
We first rewrite the condition for coarse embedding into a uniformly
convex Banach space.
\begin{thm} \label{mainth}
Let X be a metric space and $E$ be a Banach space, and $1 \leq
p<+\infty$. If there is a $\delta
>0$ such that for every $R>0 , \varepsilon >0$ there is a map
$\varphi:X\rightarrow  E $ satisfying:
\\(1) $\sup \{ \norm{\varphi (x)- \varphi (y)
} : x,y \in X,\ d(x,y) \leq R \} \leq \varepsilon$.
\\(2) $\forall m \in \mathbb
N,\  \sup \{ \norm{\varphi (x)- \varphi (y) }: x,y \in X, \ d(x,y)
\leq m \}< +\infty$.
\\(3) $\lim_{s \rightarrow +\infty}\inf\{\norm{\varphi
(x)- \varphi (y) }:x,y \in X,d(x,y)\geq s \} \geq \delta$.
\\ then X admits a coarse embedding into $E^p$.
\end{thm}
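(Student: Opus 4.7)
My plan is to run the standard direct-sum/stacking construction, now with $\ell^p$-weights instead of $\ell^2$-weights, which is why the $E^p$ target appears. For each $n\in\mathbb{N}$ I invoke the hypothesis with $R=n$ and $\varepsilon_n=2^{-n/p}$ to produce a map $\varphi_n:X\to E$ satisfying (1), (2), (3) with the common constant $\delta$. After fixing a basepoint $x_0\in X$, I define
\[
\Phi:X\to E^p,\qquad \Phi(x)=\bigl(\varphi_n(x)-\varphi_n(x_0)\bigr)_{n\in\mathbb{N}},
\]
and aim to show that $\Phi$ is a coarse map.

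The first two verifications are routine splits of $\sum_n\|\cdot\|^p$ at the index $n\sim R$. To see $\Phi(x)\in E^p$, I would split at $K=\lceil d(x,x_0)\rceil$: for $n\geq K$, condition (1) yields the $p$-summable tail $\sum_{n\geq K}2^{-n}$, while the $n<K$ block is a finite sum of finite terms by condition (2). For bornologousness, if $d(x,y)\leq R$ I split at $\lceil R\rceil$: the tail is again dominated by $\sum_{n\geq R}2^{-n}$ via (1), and the head is bounded in terms of $R$ alone by (2). Hence $\|\Phi(x)-\Phi(y)\|_{E^p}\leq S(R)$ for some function $S$ depending only on $R$.

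The main obstacle is properness, since condition (3) only supplies a pointwise-in-$n$ lower bound. For each $n$, (3) gives a threshold $s_n<\infty$ with the property that $d(x,y)\geq s_n$ forces $\|\varphi_n(x)-\varphi_n(y)\|\geq \delta/2$. Counting nonzero contributions yields
\[
\|\Phi(x)\|_{E^p}^p\;\geq\;(\delta/2)^p\cdot\#\{n\in\mathbb{N}:s_n\leq d(x,x_0)\}.
\]
Since each $s_n$ is finite, the counting function $N(T):=\#\{n:s_n\leq T\}$ is monotone and tends to $+\infty$ as $T\to+\infty$; therefore $d(x,x_0)\to\infty$ forces $\|\Phi(x)\|_{E^p}\to\infty$, which is precisely what properness requires (the preimage under $\Phi$ of a bounded set is bounded). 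Combined with bornologousness, this shows $\Phi$ is a coarse map from $X$ into $E^p$. The only delicate point is that I have no quantitative control on how fast the thresholds $s_n$ grow, but the argument uses only the bare fact that each $s_n$ is finite, which comes for free from (3).
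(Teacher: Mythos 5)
Your proposal is correct and follows essentially the same route as the paper: take $R_n=n$ with geometrically decaying $\varepsilon_n$, stack the differences $\varphi_n(\cdot)-\varphi_n(x_0)$ into $E^p$, bound the tail by condition (1) and the head by condition (2), and get properness by extracting thresholds $s_n$ from condition (3) so that at least $N(d(x,y))\to\infty$ coordinates each contribute $(\delta/2)^p$. Your counting-function phrasing of the last step is just a repackaging of the paper's choice of an increasing sequence $s_n$, so there is nothing substantively different.
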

\begin{proof} For $n\in \mathbb{N}$, let $R_n=n$, $\varepsilon = \frac {1}{2^n}$, there is a
$\varphi_n :X \rightarrow E$ satisfying the above conditions. And we
can find an $ s_n$ such that $\norm{\varphi _n (x)- \varphi _n (y)}
>\frac {\delta}{2}$ if $d(x,y) \geq s_n$, we can choose$ \{ s_n \}$
to be an increasing sequence. Fix a point $x_0 \in X$ and define
\[
\begin{split}
\varphi :&X\rightarrow E^p\\
   &x \mapsto \bigoplus_{n=1}^\infty(\varphi
_n(x)-\varphi _n(x_0))
\end{split}
\]
\\ it is easy to see that $\norm{\varphi (x)
}<+\infty$ for each $x\in X$. We show that $\varphi $ is coarse.
\\(1) For any $x,y \in X$, assume $k-1<d(x,y) \leq k$, then
\begin{eqnarray*}
\norm{\varphi (x)- \varphi (y)}^p & = & \sum_{n=1}^{+\infty }\norm{
\varphi_n(x)- \varphi_n(y)
}^p\\
 & = & \sum_{n=1}^{k-1}\norm{\varphi_n(x)- \varphi_n(y)
}^p+\sum_{n=k}^{+\infty }\norm{\varphi_n(x)- \varphi_n(y)
}^p\\
 & \leq & \sum_{n=1}^{k-1}\norm{\varphi_n(x)- \varphi_n(y)
}^p+ \sum_{n=k}^{+\infty}\frac {1}{2^{np}}.
\end{eqnarray*}
\\ Let $ C_n^k=sup \{ \norm{\varphi_n(x)-
\varphi_n(y)},d(x,y)\leq k \} $, then  $$\norm{\varphi (x)- \varphi
(y)}^p \leq \sum_{n=1}^{k-1}(C_n^k)^p+1.$$
\\ (2) For any $x,y \in X$, assume $s_{k-1} \leq d(x,y) \leq s_k$, then
\\ $$\norm{\varphi (x)- \varphi (y)}^p=
\sum_{n=1}^\infty \norm{\varphi_n (x)- \varphi_n(y)}^p
\\ \geq \sum_{n=1}^{k-1} \norm{\varphi_n (x)- \varphi_n (y)}
^p >(k-1)(\frac {\delta}{2})^p$$
\\ and $d(x,y) \rightarrow +\infty$ implies $k\rightarrow +\infty$, so $(k-1)(\frac {\delta}{2})^p \rightarrow
+\infty$.
\end{proof}
\begin{exam}
$\ell^p$ ($1\leq p<+ \infty$) satisfies the above conditions for
$E=\ell^p$.
\end{exam}
\begin{proof} Let $\delta =1$, $\forall R>0,\varepsilon >0$, there is a
natural number $h$ such that $\frac{R}{h}<\varepsilon$, define
$\varphi :\ell^p \rightarrow \ell^p$ by $\varphi(x)=\frac{x}{h}$
then
\\(1) $\sup\{ \norm{\varphi (x)- \varphi (y)}, d(x,y) \leq R
\}$=$sup \{ \frac {1}{h}\norm{ x-y}, \ d(x,y)\leq R\}< \varepsilon$.
\\(2) $\sup\{\norm{\varphi (x)- \varphi (y)}, d(x,y) \leq m\}=\frac {m}{h} < +\infty$.
\\(3) $\inf \{\norm{\varphi (x)- \varphi (y)}, d(x,y)
\geq s\} = \frac{s}{h}$, $lim_{s \rightarrow +\infty} \frac{s}{h}=
+\infty$.
\end{proof}
W.B.Johnson and N.L.Randrianarivony proved that $\ell^p(p>2)$ does
not admit a coarse embedding into a Hilbert space\cite{JR}. So the
conditions for coarse embedding into a uniformly convex Banach space
$E$ in the above theorem is very different from the coarse embedding
into Hilbert space\cite{DG}.
\begin{exam}  \label{nccond}
If $X$ admits a coarse embedding into $\ell^p$, then $X$ satisfies
the above conditions.
\end{exam}
\begin{proof}There is a $\psi :X \rightarrow \ell^p$ and $
\rho _{\pm}$, such that
\\ (1)  $\rho_{-}(d(x,y)) \leq \norm{ \psi (x)- \psi (y)}
_p \leq \rho_{+}(d(x,y))$,
\\ (2) $\lim_{r \rightarrow +\infty} \rho_+(r)=+\infty$.
\\ $\forall R>0,\varepsilon >0$, there is a nature number
h such that $\frac{\rho_+(R)}{h} < \varepsilon$
\\ define $\varphi :X\rightarrow \ell^p$ by $\varphi (x)=\frac
{\psi(x)}{h}$ then:
\\ (1)$\sup\{\norm{\varphi (x)- \varphi
(y)}_p, d(x,y)\leq R\}=sup\{\frac{\norm{\psi (x)- \psi (y)}_p}{h},
d(x,y)\leq R\}\leq\varepsilon$.
\\ (2)$\sup\{\norm{\varphi (x)- \varphi (y)}_p,d(x,y)\leq
m\} =sup\{\frac{\norm{\psi (x)- \psi (y)}_p}{h}, d(x,y)\leq m\} \leq
\frac{\rho_+(m)}{h}$.
\\ (3)$\inf\{\norm{\varphi (x)- \varphi (y)}_p,
d(x,y)\geq s\}=inf\{\frac{\norm{\psi (x)- \psi (y)}_p}{h},
d(x,y)\geq s\}\geq \frac{\rho_-(s)}{h}$
\\ and $lim_{s\rightarrow +\infty }\frac{\rho_-(s)}{h}=+\infty$.
\end{proof}

\begin{rem}(a) We can see from the proof that this $\delta$ is not
important. We can take it to be infinity in general, i.e, replace
third condition with $$\lim_{s \rightarrow +\infty}\inf\{\norm{
\varphi (x)- \varphi (y)}_p :x,y \in X,d(x,y)\geq s \} =+\infty.$$
\\
(b) If we take $E=\ell^p(1<p<+\infty)$ and change the condition (2)
of Theorem \ref{mainth} with
 $$\sup \{ \norm{\varphi (x)- \varphi (y)}: x,y \in X,{ } d(x,y) \leq m \}<L_0,\ \forall m \in \mathbb N$$
 for some fixed $ L_0$, by the Mazur map mentioned in ~\cite{BL}, then it can be embedded
into $\ell^2$, a Hilbert space.
\end{rem}

\section{On the union of metric spaces} \label{union}
In this section, we study the coarse embeddability  under taking the
union of metric spaces.
\begin{prop}\label{unionthm}
Let $X$ be a metric space, and $X=X_1 \cup X_2$, with $X_1,X_2$
  admit coarse embedding into a Banach space $E$ and $1 \leq p<+\infty$. If for any $s>0$, there is
   a bounded set $C_s$ such that the sets $\{X_i\setminus C_s\}$ is
  $s$-separated, then $X$ admits a coarse embedding into $E^p$.
\end{prop}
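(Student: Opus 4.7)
The approach is to construct a single map $\varphi : X \to E^p$ directly (using only the first two coordinates of the $\ell^p$-sum decomposition $E^p = E \oplus_p E \oplus_p \cdots$) and verify that it is a coarse embedding, without passing through Theorem \ref{mainth}. The construction reflects the splitting $X = X_1 \cup X_2$: the first coordinate encodes $X_1$ and the second encodes $X_2 \setminus X_1$.

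Concretely, I would apply the hypothesis with a fixed parameter $s_0$ to obtain a bounded set $C = C_{s_0}$ with $d(X_1 \setminus C, X_2 \setminus C) \geq s_0$, enlarging $C$ (preserving boundedness) so that it contains chosen base points $x_{0,i} \in X_i$ for $i = 1, 2$. With $\psi_i : X_i \to E$ the given coarse embeddings with control functions $\rho_{\pm}^{(i)}$, define
\[
\varphi(x) = \begin{cases} \bigl(\psi_1(x) - \psi_1(x_{0,1}),\; 0,\; 0,\; \dots\bigr), & x \in X_1, \\ \bigl(0,\; \psi_2(x) - \psi_2(x_{0,2}),\; 0,\; \dots\bigr), & x \in X_2 \setminus X_1, \end{cases}
\]
viewed inside $E^p$.

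For the bornologous condition ($d(x,y) \leq m$ implies $\|\varphi(x) - \varphi(y)\|$ bounded by a constant $S(m)$), same-piece pairs give $\|\varphi(x) - \varphi(y)\| \leq \rho_+^{(i)}(m)$ directly. For mixed pairs, reapplying the hypothesis with a fresh parameter $s' > m$ (and enlarging $C_{s'}$ to contain $x_{0,1}, x_{0,2}$) forces at least one endpoint to lie in $C_{s'}$; the triangle inequality then bounds both $d(x, x_{0,1})$ and $d(y, x_{0,2})$ in terms of $m$, $\operatorname{diam}(C_{s'})$, and $d(x_{0,1}, x_{0,2})$, giving a finite $m$-dependent bound on the $\ell^p$ norm. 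For properness, same-piece pairs with $d(x,y) \geq s$ give $\|\varphi(x) - \varphi(y)\| \geq \rho_-^{(i)}(s) \to \infty$; for a mixed pair the triangle inequality forces
\[
\max\bigl(d(x, x_{0,1}),\; d(y, x_{0,2})\bigr) \;\geq\; \tfrac{1}{2}\bigl(s - d(x_{0,1}, x_{0,2})\bigr),
\]
so at least one of the two coordinate norms diverges with $s$, and the direct-sum structure of $E^p$ prevents any cancellation.

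The principal obstacle is the mixed-case analysis, which requires the separation hypothesis to be invoked with an appropriate parameter — the fixed $s_0$ for the base construction, and a fresh $s' > m$ for the bornologous estimate at scale $m$ — together with careful triangle-inequality bookkeeping relating $d(x,y)$, the diameters of the bounded sets, and the distances $d(x, x_{0,i})$. Using two distinct coordinates in $E^p$, rather than a single copy of $E$ with $\psi_1, \psi_2$ translated to overlap, is essential for the proper direction, since otherwise the two embeddings might cancel on mixed far pairs and defeat the lower control.
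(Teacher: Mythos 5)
Your proof is correct, but it takes a genuinely different route from the paper's. The paper does not build a single map: for each pair $(R,\varepsilon)$ it rescales the given embeddings of $X_1,X_2$ so that they are $\varepsilon$-small on $R$-balls, collapses the scale-dependent bounded set $C_{2R}$ to the origin of $E\oplus E$, and sends the two complements into the two coordinates; it then feeds this whole family of maps into Theorem \ref{mainth}, whose infinite $\ell^p$-sum machinery produces the embedding into $E^p$. You instead fix the decomposition once and for all, define a single map into $E\oplus_p E$ via $\psi_1$ on $X_1$ and $\psi_2$ on $X_2\setminus X_1$ translated to kill base points, and verify the two coarse-embedding inequalities directly, invoking the separation hypothesis only in the analysis (at a fresh scale $s'>m$ for each bornologous estimate) rather than in the construction. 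Your mixed-case bookkeeping is sound: the separation forces one endpoint of a close mixed pair into a bounded set, and the triangle inequality through the base points handles the proper direction, with the $\ell^p$-direct-sum norm preventing cancellation exactly as you say. What your approach buys is simplicity and a sharper target ($E\oplus_p E$ rather than all of $E^p$), since the hypothesis hands you genuine coarse embeddings of the pieces and the decomposition does not vary with the scale. What the paper's approach buys is uniformity with the rest of Section \ref{union}: for the long-range-disconnected and infinite-union propositions the pieces themselves change with the scale $R$ (or the index set is infinite), so no single fixed map of your type is available and the scale-by-scale construction glued by Theorem \ref{mainth} becomes essential.
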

\begin{proof}We first assume that $X_1 \cap X_2 \ne \emptyset$, take
an $x_0 \in X_1 \cap X_2$, and replace $C_s$ with $C_s \cup x_0$ if
necessary, we can assume that $x_0 \in C_s$ for any $s$.
\\ \indent For any $R\geq0$, $\varepsilon\geq0$, there is a bounded set $C_{2R}$ such that
$X_i\setminus C_{2R}$ is 2R-separated. Suppose $C_{2R} \subset
B(x_0,k)$ for some $k$. For $X_i$ admits coarse embedding into $E$,
we can find a number $r>2R+k$ and a map $\varphi_r^i$, such that
\\(1) $\sup \{ \norm{\varphi_r^i (x)- \varphi_r^i (y)
} : x,y \in M_i,{ } d(x,y) \leq r\} \leq \varepsilon$.
\\(2) $\forall m \in \mathbb N,\ \sup \{ \norm{\varphi_r^i (x)- \varphi_r^i (y)
} : x,y \in M_i,{ } d(x,y) \leq m \}< + \infty$.
\\{ }(3) $\lim_{s \rightarrow +\infty}\inf\{\norm{\varphi_r^i
(x)- \varphi_r^i (y)} :x,y \in M_i,d(x,y)\geq S \}=+\infty$.\\
\\
 And we define
\begin{eqnarray*}
  \varphi :&&X\rightarrow (E\oplus E)_p\\
                     &&x \mapsto (\varphi_r^1(a)-\varphi_r^1(x_0),0)\ if\  x \in M_1 \setminus C_{2R},\\
                     &&y \mapsto (0,\varphi_r^2(b)-\varphi_r^2(x_0))\  if\  y \in M_2 \setminus C_{2R},\\
                     &&z \mapsto (0,0) \ if \ z\in C_{2R}
\end{eqnarray*}
We need to verify the conditions in Theorem \ref{mainth}:\\ \indent
(i) For $d(x,y)\leq R$, if $x\in M_1\setminus C_{2R},y\in C_{2R}$,
then $d(x_0,y)\leq k$, we have
 $d(x,x_0)\leq d(x,y)+d(y,x_0)\leq k+R$, so
 $$\norm{\varphi (x)-\varphi
 (y)}=\norm{\varphi_r^1(x)-\varphi_r^1(x_0)}<\epsilon.$$
 \indent If $x\in M_2\setminus C_{2R},y\in C_{2R}$ or  $x,y \in
 C_{2R}$ or $x,y\in M_i\setminus C_{2R}$ for same i, it is similar to prove $\norm{\varphi (x)-\varphi
 (y)}<\epsilon$.\\
(ii) $\forall m>0$, there is a bounded set $C_{m}$, such that
$X_i\setminus C_{m}$ is m-separated, and we can find a number h such
that $C_m \subset B(C_{2R},h)$. For $d(x,y)<m$,\\ if $x \in
X_i\setminus C_{2R},y \in X_i\setminus C_{2R}\ for\ same\ i$ then
$$\norm{\varphi (x)-\varphi
(y)}=\norm{\varphi_r^i(x)-\varphi_r^i(y)}.$$ if $x \in X_i\setminus
C_{2R},y \in C_{2R}$, then $d(x,x_0)\le d(x,y)+d(y,x_0)\le m+k$. And
$$\norm{\varphi (x)-\varphi
(y)}=\norm{\varphi_r^i(x)-\varphi_r^i(x_0)}.$$ if $x\in X_1\setminus
C_{2R},y\in M_2\setminus C_{2R}$, for $d(x,y)\le m$, then either
$x\in C_m$ or $y\in C_m$. Suppose $x\in
C_m$, so $d(x,x_0)\le h+k;d(y,x_0)\le h+k+m$, then\\
$$\norm{\varphi (x)-\varphi (y)}=(\norm{\varphi_r^1 (x)-\varphi_r^1
(x_0)}^p+\norm{\varphi_r^2 (y)-\varphi_r^2 (x_0)
}^p)^{\frac{1}{P}}.$$ Let $t=h+m+k$, we get\\
\begin{eqnarray*}
\sup\{\norm{\varphi (x)-\varphi (y) }, d(x,y)\le m\}\leq
max\{\sup_{d(x,y)\le t}\norm{\varphi_r^i (x)-\varphi_r^i (y)},\\
\sup_{d(x,x_0)\le t,d(y,x_0)\le t}\{(\norm{\varphi_r^1
(x)-\varphi_r^1 (x_0)} ^p+\norm{\varphi_r^2 (y)-\varphi_r^2
(x_0)}^p)^{\frac{1}{P}} \}\} <+\infty.
\end{eqnarray*}

(iii) Let $d(x,y)=s$, let $s$ tends to infinity, \\
if $x,y\in X_i\setminus C_{2R} $, then$\norm{\varphi_r^i
(x)-\varphi_r^i (y)}\rightarrow +\infty$ by the property
of $\varphi_r^i$.\\
if  $\ x\in X_i \setminus C_{2R},y\in C_{2R}$, then for
$d(y,x_0)<k$, so $d(x,y)\rightarrow +\infty$ \ implies \\$
d(x,x_0)\rightarrow +\infty$, so$$\norm{\varphi (x)-\varphi (y)
}=\norm{\varphi_r^i (x)-\varphi_r^i (x_0)}\rightarrow +\infty.$$ if
$x\in X_1\setminus C_{2R},y\in X_2\setminus C_{2R}$,
$d(x,y)\rightarrow +\infty$, implies either $d(x,x_0)\rightarrow
+\infty$ or $d(y,x_0)\rightarrow +\infty$, thus\ $$\norm{\varphi
(x)-\varphi (y) }^p=(\norm{\varphi_r^1 (x)-\varphi_r^1 (x_0)}
^p+\norm{\varphi_r^2 (y)-\varphi_r^2 (x_0)}
^p)^{\frac{1}{P}}\rightarrow +\infty.$$

if $X_1 \cap X_2=\emptyset$, we can assume that $X_i\cap C_s \ne
\emptyset(\forall s>0)$, take  $x_0\in X_1\cap C_{2R}$,\\
$y_0\in X_2\cap C_{2R}$, and define
\begin{equation*}
\begin{split}
 \qquad  \varphi :&X\rightarrow E \oplus E \\
                     &x \mapsto (\varphi_r^1(a)-\varphi_r^1(x_0),0)\  if\  x \in X_1 \setminus C_{2R},\\
                     &y \mapsto (0,\varphi_r^2(b)-\varphi_r^2(y_0))\  if\  y \in X_2 \setminus C_{2R},\\
                     &z \mapsto (0,0) \ if \ z\in C_{2R}
\end{split}
\end{equation*}
The proof follows. Applying the theorem \ref{mainth}, we finish the
proof.
\end{proof}
Gromov introduces the following property for metric space\cite{Gro}.
\begin{defn}
A matric space $X$ is called long range disconnected at infinity if
for every $n\in \mathbb N$, there exist two subsets $X^n_1$ and
$X^n_2$ in $X$ such
that: \\
(1)$d(X^n_1,X^n_2)=inf\{d(x_1,x_2)|x_1\in X^n_1, x_2\in X^n_2\}\geq d$.\\
(2)$X^n_1$ and $X^n_2$ cover almost all $X$, i.e.,
$X\setminus(X^n_1\cup X^n_2)$ is bounded.
\end{defn}
\begin{prop}
If $X$ is long range disconnected at infinity and all $\{X^n_i\}$
are equivalently coarse embedded into a Banach space $E$ by coarse
maps $\{\varphi^n_i\}$, then $X$ admits a coarse embedding into
$E^p$.
\end{prop}
\begin{proof} For any $R\geq 0$, $\epsilon\geq 0$, find an $n\in\mathbb
N$ such that $n>R$. Choose a point $x_n\in X\setminus(X^n_1\cup
X^n_2)$, define
\begin{eqnarray*}
  \varphi :&&X\rightarrow (E\oplus E)_p\\
                     &&x \mapsto (\varphi_1^n(a)-\varphi_1^n(x_n),0)\ if\  x \in X^n_1 ,\\
                     &&y \mapsto (0,\varphi_2^n(b)-\varphi_2^n(x_n))\  if\  y \in X^n_2,\\
                     &&z \mapsto (0,0) \ otherwise.
\end{eqnarray*}
Using the similar argument in proposition \ref{unionthm}, it can be
show that $\varphi$ satisfies the condition of theorem \ref{mainth},
we finish the proof.
\end{proof}
\begin{prop}
Let $X=\cup_{i\in I}X_i$ be a metric space. If for any $s>0$, there
is a bounded set $C_s$ with $X_i\cap C_s\ne \emptyset(\forall i)$
and the sets $\{X_i\setminus C_s\}$ is s-separated. If $X_i$ can be
equally coarse embedded into a Banach space $E$, then $X$ can be
coarse embedded into $E^p$.
\end{prop}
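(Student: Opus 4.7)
\emph{Proof proposal.} The plan is to extend the two-set argument of Proposition \ref{unionthm} to the infinite union $X=\cup_{i\in I}X_i$ by feeding into Theorem \ref{mainth} a map whose $i$-th coordinate records the image of a point of $X_i\setminus C_{2R}$ under a uniformly chosen $\varphi_r^i$. Given $R>0$ and $\varepsilon>0$, the hypothesis with $s=2R$ produces a bounded set $C_{2R}$ meeting every $X_i$ and with $\{X_i\setminus C_{2R}\}_{i\in I}$ pairwise $2R$-separated, hence pairwise disjoint; so every point of $X\setminus C_{2R}$ lies in a unique $X_i\setminus C_{2R}$. I would pick a basepoint $x_i\in X_i\cap C_{2R}$ for each $i$ and set $d_0=\operatorname{diam}(C_{2R})$, so that $d(x_i,x_j)\leq d_0$ for all $i,j$. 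Equivalent coarse embeddability together with the construction of Example \ref{nccond} applied uniformly in $i$ supplies maps $\varphi_r^i:X_i\to E$ satisfying conditions (1)--(3) of Theorem \ref{mainth} with parameters independent of $i$, provided $r$ is chosen larger than $2R+d_0$.

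I would then define $\varphi:X\to (\bigoplus_{i\in I}E)_p$ by sending $x\in X_i\setminus C_{2R}$ to the vector with $i$-th coordinate $\varphi_r^i(x)-\varphi_r^i(x_i)$ and all other coordinates zero, and $x\in C_{2R}$ to $0$. Disjointness of $\{X_i\setminus C_{2R}\}$ makes this well defined, and every $\varphi(x)$ has at most one nonzero coordinate. The verification of Theorem \ref{mainth}'s three conditions now parallels the proof of Proposition \ref{unionthm}. For (1), the $2R$-separation rules out $x,y$ lying in distinct $X_i\setminus C_{2R}$ when $d(x,y)\leq R$, so at most one coordinate of $\varphi(x)-\varphi(y)$ is nonzero, and the choice $r>2R+d_0$ together with condition (1) on $\varphi_r^i$ gives $\norm{\varphi(x)-\varphi(y)}\leq\varepsilon$. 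For (3), $d(x,y)\to\infty$ forces at least one of the involved $\varphi_r^i$-distances to tend to infinity, using condition (3) on the $\varphi_r^i$ and the fact that all basepoints $\{x_i\}$ lie in the bounded set $C_{2R}$.

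The main obstacle will be condition (2) in the case $d(x,y)\leq m$ with $m>2R$ and $x\in X_i\setminus C_{2R}$, $y\in X_j\setminus C_{2R}$ for $i\neq j$: here $\norm{\varphi(x)-\varphi(y)}^p$ has two nonzero summands, both of which must be controlled. The remedy is to reapply the hypothesis with $s=m$, obtaining a bounded set $C_m$ such that $\{X_k\setminus C_m\}$ is $m$-separated; since $X_i\setminus C_m$ and $X_j\setminus C_m$ are $m$-separated, at least one of $x,y$ must lie in $C_m$. Boundedness of both $C_m$ and $C_{2R}$, together with the assumption that $C_{2R}$ meets every $X_i$ (which keeps all basepoints $\{x_i\}$ inside one common bounded region), bounds $d(x,x_i)$ and $d(y,x_j)$ by a constant depending only on $m$, and condition (2) for each $\varphi_r^i$ then bounds each summand. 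With (1)--(3) established, Theorem \ref{mainth} applied to the Banach space $(\bigoplus_{i\in I}E)_p$ in place of $E$ produces a coarse embedding of $X$ into the $\ell^p$-sum of that space, which is isometrically isomorphic to $E^p$.
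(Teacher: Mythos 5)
Your proposal is correct and follows essentially the same route as the paper: choose the separating bounded set for the given scale, fix a basepoint $x_i\in X_i\cap C$ for each $i$, map $X_i\setminus C$ into the $i$-th summand via $x\mapsto\varphi_r^i(x)-\varphi_r^i(x_i)$ and send $C$ to $0$, then verify the three conditions of Theorem \ref{mainth} exactly as in the two-set Proposition \ref{unionthm} (including the use of $C_m$ to handle pairs in distinct pieces). The paper leaves the verification as ``the similar argument''; your write-up merely makes those details explicit.
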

\begin{proof}
$\forall R>0,\epsilon >0$, there is a bounded set $C_R$ such that
$M_i\setminus C_R$ is R-separated, and suppose that $C_R\subset
B(x_0,k)$ for some $k$, take an $r>k+2R$. For $X_i$ is equally
coarse embedded into $E$, we can find $\varphi _r^i:X_i \rightarrow
E$, such that\\
(1) $\underset{i}{\sup}\ \sup\{ \parallel \varphi_r^i
(x)-\varphi_r^i (y)
\parallel<\epsilon,x,y\in X_i, d(x,y)<r\}<\epsilon.$
\\(2)$\underset{i}{\sup}\ \sup\{\parallel \varphi_r^i (x)-\varphi_r^i
(y)\parallel,x,y\in X_i, d(x,y)<m\} < \infty, \forall \
m\in\mathbb{N}.$
\\(3)$\underset{s\rightarrow \infty}{\lim}\ \underset{i}{\inf}\ \inf\{\parallel \varphi_r^i (x)-\varphi_r^i (y)
\parallel,x,y \in X_i, d(x,y)>s\}=\infty.$
\\ For each
$i$, fix an $x_i\in X_i\cap C_R$. And define
\begin{eqnarray*}
\varphi: &M\rightarrow E^p\\
         &a\mapsto \underset{ith\ item}{(0,\dots,\varphi_r^i
         (x)-\varphi_r^i
         (x_i),0.\dots)}\ if \ a\in M_i\setminus C_R\\
         &b\mapsto (0,\dots,0)\ if\ b \in C_R
\end{eqnarray*}
The proof follows using the similar argument in propostion
\ref{unionthm}.
\end{proof}
\section{Relative hyperbolic group}\label{relative}
Let $G$ be a finitely generated group with generating set $S$(closed
under taking inverse) then $G$ is a proper metric space with word
length metric induced by the generating set $S$. Let $H$ be a finite
generated subgroup of $G$. We denoted $\{H\setminus{e}\}$ by
$\mathscr{H}$. Then the Cayley graph $(G,S)$ and
$(G,S\cup\mathscr{H})$ are both metric spaces with word length
metric $d_S,d_{S\cup\mathscr{H}}$, respectively.
\begin{defn}
Let $p$ be a path in $(G,S\cup\mathscr{H})$. An
$\mathscr{H}$-component of $p$ is a maximal sub-path of $p$
contained in a same left coset $gH$. The path is said to be
$without\ backtracking$ if it does not have two distinct
$\mathscr{H}$-component in a same coset $gH_i$.
\end{defn}

\begin{defn}
a path-metric space $X$ is hyperbolic if there exists some
$\delta>0$ such that the $\delta$-neigborhood of any two sides of a
geodesic triangle contain the third side. The group $G$ is said to
be $weakly$ $hyperbolic$ $relative$ to $H$ if the Cayley graph
$(G,S\cup\mathscr{H})$ is hyperbolic.
\end{defn}

\begin{defn}[see\cite{Farb}]
We say the pair $(G,H)$ satisfies the \emph{Bounded Coset
Penetration property(BCP)} if for every $R\geq 0$, there exists
$a=a(R)$ such that if $p,q$ are two geodesics in
($G,S\cup\mathscr{H}$) with $p_-=q_-$ and $d_S(p_+,q_+)\leq R$, \\
(1) Suppose that $p$ has an $\mathscr{H}$-component $s$ with
$d_S(s_-,s_+)\geq a(R)$, then q has an $\mathscr{H}$-component
contains in the same left coset of $s$. \\
(2) Suppose $s,t$ are two $\mathscr{H}$-component of $p,q$
respectively, contained in the same left coset , then
$d_S(s_-,t_-)\leq
a(R)$, $d_S(s_+,t_+)\leq a(R)$.\\
\end{defn}

\begin{defn}
The group $G$ is $\emph{strongly relative hyperbolic}$ to $H$ if it
is weakly hyperbolic to $H$ and satisfies $BCP$.
\end{defn}
Denote by $B(n)=\{g\in G|\abs{g}_{S\cup\mathscr{H}}\leq n\}$. D.Osin
proved in \cite{osin} that $B(n)$ has asymptotic dimension at most
$d$ if the subgroup $H$ have asymptotic dimension at most $d$.
M.Dadarlat and E.Guentner proved $G$ admits a coarse embedding into
a Hilbert space if $H$ admits a coarse embedding into a Hilbert
space \cite{DG2}. We prove that:
\begin{thm}
If $H$ admits a coarse embedding into $E$, then B(n) admits a coarse
embedding into $E^p$ for each $n\in\mathbb N$.
\end{thm}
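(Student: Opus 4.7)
My plan is to induct on $n$, decomposing $B(n)$ as $B(n-1)$ glued to a union of $H$-cosets and applying the gluing results of Section \ref{union}. The base case $n=1$ is easy: $B(1) = \{e\} \cup S \cup (H \setminus \{e\})$ differs from $H$ by finitely many points in $d_S$, so it inherits the coarse embedding of $H$ into $E$, and hence embeds coarsely into $E^p$.

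For the inductive step, assume $B(n-1)$ admits a coarse embedding into $E^p$. Let $A_{n-1} = \{g \in G : |g|_{S \cup \mathscr{H}} = n-1\}$ and write
$$B(n) \;=\; B(n-1) \;\cup\; \bigcup_{g \in A_{n-1}} gH,$$
absorbing the contributions of $g\cdot S$ into $B(n-1)$, since they lie within uniformly bounded $d_S$-distance of $A_{n-1}$. Each coset $gH$ is $d_S$-isometric to $H$ by left translation, so the family $\{gH\}_{g \in A_{n-1}}$ is equally coarsely embedded into $E$. To invoke the infinite-union proposition from Section \ref{union} on this family, I must verify the separation hypothesis: for every $s>0$ there is a bounded set $C_s$ intersecting each $gH$ such that $\{gH \setminus C_s\}$ is $s$-separated. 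Once this is done, $\bigcup_g gH$ embeds coarsely into $E^p$, and Proposition \ref{unionthm} glues $B(n-1)$ to $\bigcup_g gH$ to produce a coarse embedding of $B(n)$ into $(E^p \oplus E^p)_p \cong E^p$, closing the induction.

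The main obstacle is producing the bounded set $C_s$, and this is precisely where the BCP property enters. Given $s$, let $a=a(s)$ be the BCP constant, and take $C_s$ to be the intersection of $B(n)$ with the $d_S$-ball of a controlled radius (depending on $a$ and $n$) about the identity. If $x \in gH \setminus C_s$ and $y \in g'H \setminus C_s$ lie in distinct cosets and satisfy $d_S(x,y) \leq s$, I form two $(G,S\cup\mathscr{H})$-geodesics from $e$ to $x$ and from $e$ to $y$; by construction of $C_s$, the terminal $\mathscr{H}$-components of these geodesics (lying in $gH$ and $g'H$ respectively) have $d_S$-length exceeding $a$, and BCP then forces them into a common left coset, contradicting $gH \neq g'H$. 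The analogous argument separates $B(n-1) \setminus C_s$ from each $gH \setminus C_s$. The subtle point, and the main technical hurdle, is translating BCP, which compares geodesics sharing a starting point, into a statement about arbitrary pairs in $B(n)$, and verifying that the $d_S$-metric restricted to each coset remains coarsely equivalent to the metric in which $H$ was originally assumed to coarsely embed; this is where the finite $(S \cup \mathscr{H})$-length constraint defining $B(n)$ will have to be used carefully.
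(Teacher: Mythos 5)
There is a genuine gap, and it lies exactly at the point you flag as the ``main technical hurdle'': the separation hypothesis you need for the union propositions of Section \ref{union} cannot be satisfied with a \emph{bounded} set $C_s$. Distinct cosets $gH$ and $g'H$ come $d_S$-close to each other at points arbitrarily far from the identity: for instance $g$ and $g'=gs$ ($s\in S$) lie at $d_S$-distance $1$, belong to distinct cosets in general, and can be taken arbitrarily far from $e$; likewise each $gH$ meets $B(n-1)$ at the point $g$ itself, so $B(n-1)\setminus C_s$ and $\bigl(\bigcup_g gH\bigr)\setminus C_s$ are never $s$-separated for bounded $C_s$. Your BCP argument for separation also does not go through as stated: a point $x=gh\in gH$ outside a large ball about $e$ need not have a $d_S$-long terminal $\mathscr{H}$-component --- $h$ can be $d_S$-short while $g$ is far from $e$. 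What Osin's separation result actually gives (and what the paper uses) is that $\{gH\setminus Y_R\}$ is $R$-separated for $Y_R=B(n-1)T_R$ with $T_R=\{g:|g|_S\le a(R)\}$, i.e.\ the cosets only separate outside an \emph{unbounded} $d_S$-neighborhood of the base $B(n-1)$. Hence the infinite-union proposition and Proposition \ref{unionthm} simply do not apply to this decomposition.

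The paper circumvents this by not invoking Section \ref{union} at all in the inductive step. Instead it builds the map directly, for each pair $(R,\varepsilon)$, as a base-fiber construction: points of $Y_R$ are sent via an embedding $\varphi_1$ of $Y_R$ (which embeds because it lies in a bounded $d_S$-neighborhood of $B(n-1)$, handled by induction), and a point $x\in g_iH\setminus Y_R$ is written as $x=g_ix_i$ along a geodesic in $(G,S\cup\mathscr{H})$ and sent to $\varphi_1(g_i)\oplus\varphi_2(x_i)$, with $\varphi_2$ an embedding of $H$ placed in the $g_i$-th summand of $\bigoplus_{g_i}E$; the three conditions of Theorem \ref{mainth} are then checked by hand using BCP. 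Your outline correctly identifies the induction, the decomposition into cosets, the role of BCP, and the need to control the coset metric, but the reduction to the gluing propositions is the step that fails, and repairing it requires precisely this fibered construction rather than a choice of bounded core. As written, the proposal does not constitute a proof.
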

\begin{proof} we proceed by
induction on n. We have $B(0)=\{e\}$ is trivial. $B(1)=H\cup S$ is
just in the 1-neighborhood of $H$, so can be coarsely embedded. We
assume that $B(n-1)$ is coarsely embedded into $\ell^p$. we know
$$B(n)=(\bigcup_{x\in S}B(n-1)x)\cup B(n-1)H$$ \\ since
$B(n-1)x$ is just in the 1-neighborhood of $B(n-1)$ in $(G,S)$, so
it can be coarsely embedded. We concerned on $B(n-1)H$. we can find
a subset $R(n-1)$ in $B(n-1)$ such that for any $b\in B(n-1)$,
$bH=gH$ for a unique $g\in R(n-1)$. Thus
$$B(n-1)H=\bigsqcup_{g\in R(n-1)}gH.$$

$\forall R\geq 0,\ \epsilon\geq 0$, we have an $a(R)$ from the
$BCP$. We can assume $a(R)\geq R$ and $a(R)$ is increasing.  let
$T_R=\{g\in G|\abs{g}_S\leq a(R)\}$. Let $Y_R=B(n-1)T_R$, then
D.Osin proved that $\{gH\setminus Y_R\}_{g\in R(n-1)}$ is
$R$-separated \cite{osin}. We find maps $\varphi_1$ and $\varphi_2$
for
embedding of $Y_R$ and $H$, respectively, such that\\
\\
(1)$\sup \{ \norm{ \varphi_i(x)- \varphi_i(y) } _p : x,y \in X,{ }
d_S(x,y) \leq 3a(R) \} \leq \varepsilon/2$,
\\(2)$C_m=\sup \{ \norm{\varphi_i(x)- \varphi_i(y)
} _p : x,y \in X,{ } d_S(x,y) \leq m \}< + \infty \quad{ } \forall m
\in \mathbb N$,
\\(3)$\lim_{t \rightarrow +\infty}inf\{\norm{\varphi_i
(x)- \varphi_i (y)} _P :x,y \in X,d_S(x,y)\geq t \} \geq \delta$. \\
\\
We define a map:
$$\varphi:B(n-1)H\rightarrow E\oplus(\bigoplus_{g_i\in
R(n-1)}E)$$ as following.\\
For $x\in g_iH\setminus Y_R$, fix a shortest word $A_i$ for $g_i$ in
$(G,S\cup\mathscr{H})$. Let $A_i=g_i'h_i'$ where $h_i'$ is the
$\mathscr{H}$-component in $g_iH$. Replacing $g_i$ with $g_i'$, we
can assume $g_i$ does not have an $\mathscr{H}$-component in $g_iH$.
Then $x=g_ix_i$ is a geodesic in $(G,S\cup\mathscr{H})$\cite{osin}.
We define $\varphi(x)=\varphi_1(g_i)\oplus\varphi_2(x_i)$ where
$\varphi_2(x_i)$ is in the $g_i$ item in $\oplus_{g_i\in R(n-1)}E)$.
And let $\varphi(y)=\varphi_1(y)$ for $y\in Y_R$. We need to verify
the three conditions of embedding.\\
\\
(1)For $d_S(x,y)\leq R$, we have two cases.\\
\\
 \indent Case a: If $x,y\in Y_R$, $\norm{\varphi_1(x)-\varphi_1(y)}\leq
\epsilon$.\\
\\

 \indent Case b: If $x\in g_iH\setminus Y_R$, we have $y\in g_iH$ by $BCP$.
 If $y\in
 g_iH\setminus Y_R$, let $y=g_iy_i$, $x=g_ix_i$ be the geodesics in
 $(G,S\cup\mathscr{H})$. Thus\\ $d_S(x_i,y_i)=d_S(x,y)\leq R$,
 $$\norm{\varphi(x)-\varphi(y)}=\norm{\varphi_2(x_i)-\varphi_2(y_i)}<\epsilon.$$
  If $y\in g_iH\cap Y_R$, let $y=g_i'y_i'$ be a geodesic in
 $(G,S\cup\mathscr{H})$. For $d_S(x,y)\leq R$, we have $d_S(g_i,g_i')\leq
 a(R)$ and $|y_i|_s\leq a(R)$. Then $$d_S(g_i,y)\leq d_S(g_i,g_i')+d_S(g_i',y)\leq
 2a(R),$$$$|x_i|_s=d_S(g_i,x)\leq d_S(g_i,y)+d_S(y,x)\leq 3a(R).$$
 we have
$\norm{\varphi(x)-\varphi(y)}=(\norm{\varphi(g_i)-\varphi(y)}^p+\norm{\varphi_2(x_i)}^p)^\frac{1}{p}\leq
\epsilon$.\\
\\
\noindent(2)For $m\in \mathbb{N}$ and $d_S(x,y)\leq m$, let
$T_m=\{g||g|_S\leq a(m)\}$ and $Y_m=B(n-1)T_m$. Then
$\{g_iH\setminus Y_m\}$ is $m$-separated. If $x,y$ is both in $Y_R$
or in $g_iH$ for some i, it is easy to see
$\norm{\varphi(x)-\varphi(y)}$ is bounded. So we only need to
consider $x\in g_iH$, $y \in g_jH$ with $i\neq j$. For $d_S(x,y)\leq
m$, either $x$ or $y$ is in $Y_m$. We assume $y\in Y_m$.
 For $x\in
g_iH\cap (Y_m\setminus Y_R)$, let $x=g_ix_i$ be a geodesic in
$(G,S\cup\mathscr{H})$. We have two cases,\\

 \indent Case a: If $y\in g_jH\cap (Y_m\setminus Y_R)$.
 let $y=g_jy_j$ be geodesics in $(G,S\cup\mathscr{H})$.
Then\\
$$\norm{\varphi(x)-\varphi(y)}^p=\norm{\varphi_1(g_i)-\varphi_1(g_j)}^p+\norm{\varphi_2(x_i)}^p+\norm{\varphi_2(y_j)}^p.$$\\
for $d_S(g_i,g_j)\leq a(m)$, $|x|_S\leq a(m)$, $|y_j|_S\leq a(m)$,
we know $\norm{\varphi(x)-\varphi(y)}\leq 3C_m$.\\
\indent Case b: If  $y\in g_jH\cap Y_R$, let $y=g_i'y_i'$ be the
geodesic in $(G,S\cup\mathscr{H})$ with $y_i$ is a
$\mathscr{H}$-component and $|y_i'|_S\leq a(R)$. For $d_S(x,y)$,
then $d_S(g_i,g_i')\leq a(m)$
and $d_S(g_i,y)\leq 2a(m)$, $\abs{x}_S\leq 3a(m)$. Then\\
$$\norm{\varphi(x)-\varphi(y)}^p=\norm{\varphi_1(g_i)-\varphi_1(y)}^p+\norm{\varphi_2(x_i)}^p.$$
we have that $\norm{\varphi(x)-\varphi(y)}\leq 2C_m.$ \\
\\
(3)We have $d_S(x,y)\leq l(x_g)+d_S(g_i,g_j)+l(y_g)$, thus
$d_S(x,y)$ tends infinity implies at least one the three must tends
to infinity. So
$$\lim_{t\rightarrow+\infty}\inf\{\norm{\varphi(x)-\varphi(y)},d_S(x,y)\geq
t\}=\infty.$$ By Theorem \ref{mainth}, $B(n)$ admits a coarse
embedding into $E^p$.
\end{proof}





\bibliographystyle{elsarticle-num}
\bibliography{<your-bib-database>}



\end{document}